\theoremstyle{plain}
\newtheorem{Th}{Theorem}
\newtheorem{theorem}{Theorem}[section]
\newtheorem{proposition}[theorem]{Proposition}
\newtheorem{lemma}[theorem]{Lemma}
\newtheorem{corollary}[theorem]{Corollary}
\newtheorem{example}[theorem]{Example}
\theoremstyle{definition}
\newtheorem{definition}[theorem]{Definition}
\theoremstyle{remark}
\newtheorem*{claim}{Claim}
\newtheorem{case}[theorem]{Case}
\newtheorem{remark}[theorem]{Remark}
\DeclareMathOperator{\Sing}{Sing}
\newcommand{\QED}{\ifhmode\unskip\nobreak\fi\quad {\rm Q.E.D.}} 
\newcommand\iso{\cong}
\newcommand{\f}{\varphi}
\newcommand{\I}{\mathcal{I}}
\renewcommand{\L}{\mathcal{L}}
\renewcommand{\O}{\mathcal{O}}
\renewcommand{\o}{\mathcal{O}}
\renewcommand{\P}{\mathbb{P}}
\newcommand{\Q}{\mathbb{Q}}
\newcommand{\rat}{\dasharrow}
\title{Birational geometry of rational quartic surfaces}
\author{Massimiliano Mella}
\address{Dipartimento di Matematica e Informatica\\
Universit\`a di Ferrara\\
Via Machiavelli 30\\
44121 Ferrara, Italia} \email{mll@unife.it}
\date{April 2019}
\subjclass{Primary 14E25 ; Secondary 14E05, 14N05, 14E07}
\keywords{Birational maps; Cremona equivalence; embeddings;
hypersurfaces}
\begin{document}
\maketitle
\section*{Introduction}

Let $X\subset\P^n$ be an irreducible and reduced projective variety over an algebraically
closed field. A classical question is to study the birational
embedding of $X$ in $\P^n$ under the action of  the Cremona group of $\P^n$. 
In other words considering $X_1$ and $X_2$,
two birationally equivalent
projective varieties in $\P^n$,   one wants to understand  if there exists  a
Cremona transformation of $\P^n$ that maps $X_1$ to $X_2$. If this is
the case  $X_1$ is said to be Cremona Equivalent (CE) to $X_2$, see Definition~\ref{def:CE}. This
projective statement  can also be interpreted in terms of log Sarkisov
theory, \cite{BM}, and is related to the  Abhyankar--Moh problem, \cite{AM} and
\cite{Je}. In the latter paper it is proved, using techniques derived
form  Abhyankar--Moh problem, that over the complex
field the birational
embedding is unique as long as $\dim X< \frac{n}2$. 
The general problem is then completely solved in \cite{MP} where it is proved
that this is the case as long as the codimension of $X_i$ is at
least 2, see also \cite{CCMRZ} for a more algorithmic way to produce
the required Cremona equivalence. Examples of inequivalent embeddings of divisors are well
known, see also \cite{MP}, in all dimensions.
The problem of Cremona equivalence is therefore reduced to study the
action of the Cremona group on  divisors.   A class of divisors for which a reasonable answer is
known is that of cones. In  \cite{Me2} it is proved that two cones are CE if their
hyperplane sections are birational.

The special case of plane curves has been widely treated  both in
the old times, \cite{Co}, \cite{SR}, \cite{Ju}, and more recently, \cite{Na}, \cite{Ii}, \cite{KM},
\cite{CC}, and \cite{MP2}, see also \cite{BB} for
a nice survey. In \cite{CC} and \cite{MP2} a complete
description of plane curves up to Cremona equivalence is given and in
\cite{CC} a detailed study of the Cremona equivalence for linear systems 
is furnished. 
In particular it is interesting to note that the Cremona equivalence
of a plane curve is dictated by its singularities and cannot be
guessed without a partial resolution of those, \cite[Example
3.18]{MP2}. Due to this it is quite hard even in the plane curve case to
determine the Cremona equivalence class of a fixed curve simply by its
equation.

It is then natural to investigate surfaces in $\P^3$. In this set up using the
$\sharp$-Minimal Model Program, developed in \cite{Me} or minimal model program
with scaling \cite{BCHMc}, a criterion for detecting
surfaces Cremona equivalent to a plane has been given in \cite{MP2}. The criterion, inspired by
the previous work of Coolidge on curves Cremona equivalent to lines
\cite{Co}, allows to determine all rational surfaces
that are Cremona equivalent to a plane, \cite[Theorem 4.15]{MP2}. 
Unfortunately, worse than in the plane
curve case, the criterion requires not only the
resolution of singularities but also a control on different log
varieties attached to the pair $(\P^3,S)$.  So far it has
been 
impossible to apply it to explicit examples of surfaces in $\P^3$. 
The main difficulty coming from the necessity to check the threshold,
 see Definition~\ref{def:threshold}, on all
good models, see Definition~\ref{def:good_models}, of the pair
$(\P^3,S)$. In this note I start removing this condition from the
criterion in Corollary~\ref{cor:rhominorediuno}. This improvement
gives back a result that can be applied  in a wide range of
cases. Next I concentrate on the case of quartic rational
surfaces. The reason I study this special class of surfaces is twofold. Firstly it
is quite easy to study the CE up to cubic surfaces, see
Proposition~\ref{rem:cubics}. Then rational quartic surfaces are the first non
immediate case having hundreds of non isomorphic families, \cite{De} \cite{Je16}.
Beside this, there is an intrinsic  complexity of quartic
surfaces in $\P^3$ that deserves to be studied under any
perspective. Smooth quartic surfaces are the only smooth hypersurfaces
with automorphisms not coming from linear automorphism of $\P^n$,
\cite{MM}. In a recent paper K. Oguiso produced examples of isomorphic
smooth
quartic surfaces that are not CE, \cite{Og}. It is a long standing problem
to determine which quartic surfaces are stabilized by subgroup of the Cremona
group, that is for which quartic surface $S\subset\P^3$ there is a
Cremona modification $\omega:\P^3\rat\P^3$ such that $\omega$ is not
an isomorphism and $\omega(S)=S$. The above problem has been studied
by Enriques \cite{En} and Fano \cite{Fa} and also by Sharpe and coauthors in a series of papers,
\cite{MS} and \cite{SS}, at the beginning of the $XX^{\rm th}$
century. More recently Araujo-Corti-Massarenti continued the study of mildly
singular quartic surfaces admitting a non trivial  stabilizers in the Cremona
Group, \cite{ACM}, in the context of Calabi-Yau pairs preserving
symplectic forms.
In the light of these specialities of quartic surfaces the main theorem I
prove is the following, quite surprising, result.
\begin{Th} Let $S\subset\P^3$ be an irreducible and reduced rational
  quartic surface. Then $S$ is Cremona Equivalent to a plane.
\end{Th}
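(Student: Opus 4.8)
The plan is to deduce the theorem from the improved Coolidge-type criterion of Corollary~\ref{cor:rhominorediuno}: once the dependence on \emph{all} good models has been removed, it suffices to exhibit a single good model of the pair $(\P^3,S)$ on which the threshold of Definition~\ref{def:threshold} is strictly less than $1$. In this way the whole problem collapses to proving the single inequality $c(\P^3,S)<1$ for every irreducible reduced rational quartic $S$, and this I will read off directly from the singularities of $S$.

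The key structural input is that rationality forces $S$ to be badly singular. By adjunction a quartic is Gorenstein with $\omega_S\cong\O_S$; hence if $S$ were normal with at worst Du Val (rational double point) singularities, its minimal resolution would be a smooth surface with trivial canonical class and $q=0$, necessarily a K3 surface, which is not rational. Therefore a rational quartic is either non-normal, and so singular along a curve $\Gamma$, or else normal with at least one isolated singularity strictly worse than a rational double point, that is, a non-canonical point. I will treat these two alternatives separately and show that each makes the pair $(\P^3,S)$ non-canonical, pushing the threshold below $1$.

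In the non-normal case I would blow up the reduced double curve $\Gamma\subset\Sing S$. Since $\Gamma$ has codimension $2$ in the smooth threefold, the blow up $\sigma\colon Y\to\P^3$ has exceptional divisor $E$ with $K_Y=\sigma^*K_{\P^3}+E$, while $\mult_\Gamma S=2$ gives $\sigma^*S=\tilde S+2E$; hence $a(E;\P^3,cS)=1-2c$, which is negative for $c>\tfrac12$, so $c(\P^3,S)\le\tfrac12$. In the normal case I would invoke inversion of adjunction: a point of $S$ worse than Du Val is non-canonical on $S$, so $(\P^3,S)$ is non-canonical there, forcing $c(\P^3,S)<1$. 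Concretely, a point of multiplicity $m\ge3$ already gives $K_Y=\sigma^*K_{\P^3}+2E$ and $\sigma^*S=\tilde S+mE$, whence $a(E;\P^3,cS)=2-mc<0$ for $c>2/m$ and $c(\P^3,S)\le\tfrac23$; the remaining non-canonical double points are handled by continuing the resolution until a valuation of discrepancy $<1$ appears. In every case $c(\P^3,S)<1$, and Corollary~\ref{cor:rhominorediuno} then gives that $S$ is Cremona equivalent to a plane, the low-degree surfaces met incidentally in the reduction being already covered by Proposition~\ref{rem:cubics}.

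The step I expect to cause the most trouble is the normal, non--Du-Val case. There a single blow up of the point need not exhibit the inequality: at $c=1$ the point blow up only yields log discrepancy $2-\mult_p S$, which is $\ge0$ precisely for double points, so one cannot stop at the first valuation. The honest argument must pass through inversion of adjunction to convert the non-canonicity of the surface singularity into non-canonicity of the pair, and then check that the resulting model genuinely meets the hypotheses of Corollary~\ref{cor:rhominorediuno} (that it is a good model and that the $\rho$-bound applies). Verifying that this bookkeeping is uniform across the many families of rational quartics, rather than a case-by-case classification, is the real content hidden behind the clean threshold statement.
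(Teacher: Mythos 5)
There is a genuine gap, and it is fatal to the strategy: you have conflated two different thresholds. The threshold $\rho$ of Definition~\ref{def:threshold} is an \emph{effectivity} threshold, $\rho(T,H)=\sup\{m\in\Q \mid H+mK_T \text{ is } \Q\text{-effective}\}$; it is not the log canonical threshold, and it is insensitive to discrepancy computations. All of your blow-up calculations ($a(E;\P^3,cS)=1-2c$ along the double curve, $a(E;\P^3,cS)=2-mc$ at a triple point, inversion of adjunction at a non-Du Val point) bound the log canonical threshold $c(\P^3,S)$, which is a different invariant and does not appear in Corollary~\ref{cor:rhominorediuno}. In fact, for \emph{any} quartic, singular or not, one has $S+mK_{\P^3}\sim\O_{\P^3}(4-4m)$, so $\rho(\P^3,S)=1$ identically: the singularities of $S$ are invisible to the effective threshold computed on $\P^3$ itself. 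There is no inequality ``$c(\P^3,S)<1$'' that the problem collapses to.

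Two further requirements of Corollary~\ref{cor:rhominorediuno} are dropped in your plan. First, the inequality $0<\rho(T,S_T)<1$ must be verified on a \emph{good model}, i.e.\ with $S_T$ smooth and $T$ terminal and $\Q$-factorial; the pair $(\P^3,S)$ is never a good model for a singular quartic, so one must actually construct a resolution and control the threshold there, which is exactly the difficulty the paper's constructions are designed to circumvent. Second, the criterion needs the \emph{lower} bound $\rho>0$ as well, and this is not automatic: the paper's own remark exhibits good models with $\rho=0$ (projection of a quintic elliptic scroll), and this is where rationality genuinely enters. Your one correct structural observation --- that rationality forces $S$ to be non-normal or to have a worse-than-Du-Val point, by the K3 argument --- does parallel the paper's use of an elliptic (irrational) singularity, but the paper then proceeds in an entirely different way: cones are handled by the results of \cite{Me2}, triple points by the monoid construction, the two classified types of isolated irrational double points (via \cite{Um}, \cite{Je16}, \cite{De}) by explicit linear systems of quadrics mapping $(\P^3,S)$ onto the Veronese cone $X_1$ or its projection $X_2$ and then projecting down to a quartic with a double curve, and the double-curve cases by further explicit modifications (quadrics through a line, a conic, or the scroll geometry of the twisted cubic). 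Only at the end of each chain, when an explicitly \emph{smooth} model is in hand (a $(3,2)$ divisor in $\P^1\times\P^2$ with $\rho=2/3$, or a smooth $S_1\subset X_1$ with $\rho=4/5$), is Corollary~\ref{cor:rhominorediuno} invoked. That case-by-case geometric reduction is the real content of the theorem and cannot be replaced by discrepancy bookkeeping on $(\P^3,S)$.
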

This shows that any rational quartic has a huge
stabilizer in the Cremona group disregarding the type of singularity it may
have. Indeed it is amazing that, even if there are hundreds of non
isomorphic families of
rational quartics, see \cite{Je16} \cite{De},
the Cremona group of $\P^3$ is playable enough to smooth any of them
to a plane. A similar statement is not true for rational surfaces of
degree at least 8, as a straightforward consequence of
Noether-Fano inequalities. I have not a precise feeling on what happens in
the remaining degrees 5,6,7, but I think it is worthwhile to study
them all.

The proof of the theorem is based on the simplified version of the
criterion in \cite{MP2} together with the analysis of some special
Cremona modification associated to linear systems of quadrics. Indeed in many
instances  it is useful to  produce a
linear system of quadrics having multiplicity half the multiplicity of
$S$ along some valuation embedded in $\Sing(S)$. Via this linear
system the quartic is often simplified and can be linearized in an easier
way.

I want to thank Ciro Ciliberto for reviving my interest in Cremona
Equivalence for rational surfaces during a very pleasant stay in
Cetraro and Igor Dolgachev for pointing out Jessop's book \cite{Je16} and
the Cyclides treated in the final Example. Thanks are due to the
referee, whose careful reading prevented misprints and clarified
the exposition.

\section{Preliminaries}
I work over the complex field.

\begin{definition}\label{def:CE}
  Let $X,Y\subset\P^N$ be irreducible and reduced subvarieties of
  dimension $r$. I say
  that $X$ is Cremona Equivalent (CE) to $Y$ if there is a
  birational modification $\phi:\P^N\rat\P^N$ such that $\phi(X)=Y$
  and $\phi$ is well defined on the generic point of $X$.
\end{definition}

It is clear that if $X$ is CE to $Y$ then $X$ and $Y$ are
birational. This necessary condition is also sufficient as long as $X$
is not a divisor by the main theorem in \cite{MP}, see
also\cite{CCMRZ}.
In this note I am interested in studying the CE of rational surfaces
of $\P^3$. For this reason   I start with some definition and
results about uniruled 3-folds.
\begin{definition}\label{def:threshold}
Let $(T,H)$ be a $\Q$-factorial uniruled
   3-fold and $H$ an irreducible and
   reduced effective Weil divisor
  on $T$. Let
  $$\rho(T,H)=:\mbox{  \rm sup   }\{m\in \Q|H+mK_T
\mbox{
   is $\Q$-linearly equivalent to an effective $\Q$-divisor  }\}$$
 be the (effective) threshold of the pair
 $(T,H)$.
\end{definition}
\begin{remark} The threshold is not a
birational invariant of  pairs and it
is not preserved by blowing up.
Consider
 a plane $H\subset\P^3$ and let
$Y\to \P^3$ be the blow up of a point in $H$ 
then $\rho(Y,H_Y)=0$, while
$\rho(\P^3,H)=1/4$. For future reference note that both are less than
one.
\end{remark}

In \cite{MP2}, to overcome this problem it was introduced the notion
of good models and of sup threshold $\overline{\rho}(T,S)$,
\cite{MP2}.  These combined allowed to
characterize the  Cremona Equivalence to a
plane, \cite[Theorem 4.15]{MP2}. The dark side of this
characterization  is the
impossibility to check it on explicit examples.

Here, by a simple trick, I want to simplify the statement of
\cite[Theorem 4.15]{MP2} to make it applicable in many instances.
For this purpose I start recalling the following definition.

\begin{definition}\label{def:good_models}
Let $(Y,S_Y)$ be a 3-fold pair. The pair $(Y,S_Y)$ is a birational
model of the pair $(T,S)$ if there is a birational map $\f:T\rat Y$
such that  $\f$ is well defined on the generic point of
$S$ and $\f(S)=S_Y$. 
A good model, \cite{MP2}, is a pair  $(Y,S_Y)$ with $S_Y$ smooth
and $Y$ terminal and $\Q$-factorial.
\end{definition}

\begin{remark}
 Let $(T,S)$ be a pair. To produce a good model it is enough to
 consider a log resolution of $(T,S)$. Clearly there are infinitely
 many good models for any pair and running a directed MMP one can find
 the one that is more suitable for the needs of the moment.
\end{remark}

My first aim is to show that to check the Cremona Equivalence to a
plane it is not necessary to go through all good models. In this
direction the first technical result I am proving is that, even if the
threshold is not a birational
invariant of the pair as a number, there is the following useful
property.

\begin{lemma}\label{lem:rhominorediuno} Let $(T,S)$ and $(T_1,S_1)$ be
  birational models of a pair. Assume that $(T,S)$ has canonical singularities. If
  $\rho(T,S)=a\geq 1$ then $\rho(T_1,S_1)\geq a$.
\end{lemma}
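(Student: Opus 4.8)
The plan is to compare the two thresholds on a common resolution and to transport effectivity of $S+mK_T$ over to $S_1+mK_{T_1}$ by pushing forward. Since $(T,S)$ and $(T_1,S_1)$ are birational models of one pair, composing the two structure maps gives a birational map $\f\colon T\rat T_1$, defined at the generic point of $S$, with $\f(S)=S_1$. I would choose a smooth $3$-fold $W$ resolving $\f$, with birational morphisms $p\colon W\to T$ and $q\colon W\to T_1$; as $T$ and $T_1$ are $\Q$-factorial the pullbacks $p^*,q^*$ are defined on all Weil divisors. Arranging $W$ to dominate the graph of $\f$, the strict transform of $S$ and that of $S_1$ coincide on $W$; call this prime divisor $\tilde S$. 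Because $\rho(T_1,S_1)$ is a supremum, it suffices to show that $S_1+mK_{T_1}$ is $\Q$-linearly equivalent to an effective divisor for every rational $m$ with $1\le m<a$: this yields $\rho(T_1,S_1)\ge m$ for all such $m$, hence $\rho(T_1,S_1)\ge a$. This is the only place the hypothesis $a\ge 1$ enters, giving access to values $m\ge 1$.

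Fix such an $m$ and write $S+mK_T\sim_\Q D$ with $D\ge 0$; this is possible because $S$ is effective, $\rho(T,S)=a$, and the set of $m$ for which $S+mK_T$ is $\Q$-effective is an interval, hence contains $[0,a)$. On $W$ I record the two total transforms
\[
p^*(S+mK_T)=mK_W+\tilde S+\sum_i(c_i-ma_i)E_i,\qquad q^*(S_1+mK_{T_1})=mK_W+\tilde S+\sum_j(d_j-mb_j)F_j,
\]
where the $E_i$ are the $p$-exceptional prime divisors, $a_i=a(E_i;T)$ their discrepancies and $c_i=\mult_{E_i}S$, and the $F_j$ are the $q$-exceptional divisors (the coefficients $d_j,b_j$ will play no role). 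Subtracting, the class $G_m:=q^*(S_1+mK_{T_1})-p^*(S+mK_T)=\sum_i(ma_i-c_i)E_i+\sum_j(d_j-mb_j)F_j$ is supported on the exceptional loci of $p$ and $q$. Combining with $p^*(S+mK_T)\sim_\Q p^*D$ and applying $q_*$ gives
\[
S_1+mK_{T_1}\ \sim_\Q\ q_*p^*D+q_*G_m .
\]
Here $q_*p^*D$ is effective, being the pushforward of the effective divisor $p^*D$, while in $q_*G_m$ every $F_j$ and every $E_i$ that is also $q$-exceptional dies, so only the divisors $E_i$ that are $p$-exceptional but not $q$-exceptional survive, contributing $\sum(ma_i-c_i)\,q_*E_i$.

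The main obstacle, and the heart of the matter, is controlling the sign of these surviving coefficients, and this is exactly where canonicity is used. A divisor $E_i$ that is $p$-exceptional but not $q$-exceptional has center of codimension at least $2$ on $T$ yet defines a genuine divisor on $T_1$. Since the pair $(T,S)$ is canonical, its discrepancy satisfies $a(E_i;T,S)=a_i-c_i\ge 0$, so $a_i\ge c_i\ge 0$; as $m\ge 1$ this forces $ma_i\ge a_i\ge c_i$, whence $ma_i-c_i\ge 0$ and $q_*G_m\ge 0$. Therefore $S_1+mK_{T_1}$ is $\Q$-linearly equivalent to the effective divisor $q_*p^*D+q_*G_m$, giving $\rho(T_1,S_1)\ge m$ and, letting $m\to a$, the lemma. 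I would stress that no hypothesis on $T_1$ is required: the $q$-exceptional contributions vanish under $q_*$ regardless of sign, which is precisely why only $(T,S)$ need be canonical, matching the asymmetry of the statement.

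The one delicate point I anticipate is the passage to the supremum in the boundary case $a=1$, where the inequality $ma_i\ge c_i$ becomes tight for the crepant valuations of $(T,S)$ (those with $a_i=c_i$) and fails for $m<1$. I would dispose of it by using $m=a=1$ directly: the identity $\rho(T_1,S_1)\ge 1$ follows from the same computation at $m=1$, where $a_i\ge c_i$ still yields $q_*G_1\ge 0$, provided the threshold of $(T,S)$ is computed by an honest effective representative of $S+K_T$; otherwise a semicontinuity argument on the family $\{S_1+mK_{T_1}+(1-m)\,q_*(\sum_{\text{crepant}} a_i E_i)\}_{m<1}$ recovers the claim.
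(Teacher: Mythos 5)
Your argument is, in substance, the paper's own proof. The paper resolves $\f$ by a smooth $Z$, splits $aK_Z+S_Z=(a-1)K_Z+(K_Z+S_Z)$, uses canonicity of $(T,S)$ to write this as the pullback of $aK_T+S$ plus effective exceptional divisors, and pushes forward to $T_1$. Your coefficientwise inequality $ma_i-c_i=(m-1)a_i+(a_i-c_i)\ge 0$ is exactly that splitting: $(m-1)a_i\ge 0$ uses $m\ge 1$ and canonicity of $T$ (which follows from canonicity of the pair, as you note), and $a_i-c_i\ge 0$ is canonicity of the pair; your $q_*q^*$ bookkeeping is equivalent to the paper's single pushforward $q_*(aK_Z+S_Z)\sim_{\Q}aK_{T_1}+S_1$ (the paper never pulls back from $T_1$, so it does not even need $\Q$-factoriality of $T_1$ at this step, though that is part of the definition of a pair anyway).

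The one genuine divergence is the treatment of the supremum defining $\rho$, and there your proposal contains an error. The paper reads the hypothesis $\rho(T,S)=a$ as asserting that $aK_T+S$ itself is $\Q$-effective and computes directly at $m=a$; you instead approximate with rational $m\in[1,a)$, which is cleaner when $a>1$ but empty when $a=1$. Your first remedy for $a=1$ (compute at $m=1$ under the assumption that $S+K_T$ has an effective representative) is precisely the paper's implicit reading and works. The ``semicontinuity'' fallback, however, is not a proof: knowing that $S_1+mK_{T_1}+(1-m)q_*\bigl(\sum_{\mathrm{crepant}}a_iE_i\bigr)$ is effective for $m<1$ says nothing about $S_1+mK_{T_1}$ itself, since effectivity is not inherited after subtracting an effective divisor. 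Nor can this route be repaired, because for $m<1$ the transfer of effectivity genuinely fails even for canonical pairs: the paper's own remark after the definition of the threshold gives a plane $H\subset\P^3$ with $\rho(\P^3,H)=1/4$, while after blowing up a point of $H$ the threshold drops to $0$, so effectivity at individual values $m<1$ does not pass to birational models. The clean fix is to drop the fallback and state explicitly that the hypothesis is used in the form ``$aK_T+S$ is $\Q$-effective''; with that reading your $m=1$ computation settles $a=1$ and your limiting argument settles $a>1$, in full agreement with the paper.
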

\begin{proof}
  Let $\f:T\rat T_1$ be a birational map with
  $\f(S)=S_1$. Let
$$\xymatrix{
&Z\ar[dl]_{q}\ar[dr]^{p}&\\
T\ar@{.>}[rr]^{\f}&&T_1,}
$$
be a resolution of the map $\f$.

I  have 
\begin{eqnarray*}
  aK_Z+S_Z=(a-1)K_Z+K_Z+S&=p^*(a-1)K_T+\Delta+ p^*(K_T+S)+\Delta_S=\\
  &=p^*(aK_T+S_T)+\Delta+\Delta_S,
\end{eqnarray*}
for $\Q$-divisors $\Delta$ and $\Delta_S$. The pairs $(T,S)$ has
canonical singularities, therefore
$\Delta$ and $\Delta_S$ are effective divisors.
By hypothesis
$aK_T+S$ is $\Q$-effective, thus $aK_Z+S_Z$ is $\Q$-effective. Since
$$q_*(aK_Z+S_Z)\sim_{\Q}aK_{T_1}+S_1$$
this is enough to prove that $\rho(T_1,S_1)\geq a$.
\end{proof}

 As a direct consequence of Lemma~\ref{lem:rhominorediuno} I may reformulate the condition of being Cremona
 Equivalent to a plane, \cite[Theorem 4.15]{MP2}, avoiding the check
 of the threshold of all good models.

 \begin{corollary}\label{cor:rhominorediuno} A rational surface $S\subset\P^3$ is Cremona equivalent
   to a plane if and only if there is a good model $(T,S_T)$ of
   $(\P^3,S)$ with $0<\rho(T,S_T)<1$.
 \end{corollary}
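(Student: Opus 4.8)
The plan is to derive this corollary directly from the characterization of \cite[Theorem 4.15]{MP2}, whose hypothesis is phrased in terms of the sup threshold $\overline\rho(\P^3,S)$, the supremum of $\rho(T,S_T)$ taken over \emph{all} good models $(T,S_T)$ of $(\P^3,S)$: namely, $S$ is Cremona equivalent to a plane if and only if $0<\overline\rho(\P^3,S)<1$. The content of the corollary is that one may drop the supremum and test a single good model, and the tool that makes this legitimate is Lemma~\ref{lem:rhominorediuno}. So the whole argument is a matching of the existential condition ``there is a good model with $0<\rho<1$'' against the condition ``$0<\overline\rho<1$''.

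The key observation I would isolate first is that every good model is terminal, hence canonical, so Lemma~\ref{lem:rhominorediuno} applies to each of them. Read contrapositively, the lemma says: if some good model $(T_0,S_0)$ has $\rho(T_0,S_0)\geq 1$, then, being canonical, it forces $\rho\geq 1$ on every birational model of the pair, in particular on every good model. Equivalently, the property $\rho<1$ is shared by all good models or by none; it is a birational property of the pair rather than an artifact of the chosen resolution. This is exactly what lets an existential check replace the computation of a supremum.

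With this in hand the two implications are short. For the direct implication, if $S$ is Cremona equivalent to a plane then $0<\overline\rho<1$ by \cite[Theorem 4.15]{MP2}; since the supremum is positive some good model $(T,S_T)$ satisfies $\rho(T,S_T)>0$, and since $\rho(T,S_T)\leq\overline\rho<1$ that same model lies in $(0,1)$. For the converse, suppose $(T,S_T)$ is a good model with $0<\rho(T,S_T)<1$. Then $\overline\rho\geq\rho(T,S_T)>0$, while the observation of the previous paragraph shows that no good model can have threshold $\geq 1$, whence $\overline\rho<1$. Thus $0<\overline\rho<1$ and \cite[Theorem 4.15]{MP2} gives the claim.

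I expect the only delicate point to be the boundary value $\rho=1$, and this is precisely where Lemma~\ref{lem:rhominorediuno} and its canonicity hypothesis do the work: exhibiting a single good model strictly below $1$ must be leveraged to exclude the threshold creeping up to, or past, $1$ on any other model. The monotonicity supplied by the lemma controls exactly this boundary behaviour, so that producing one good model in the open interval $(0,1)$ certifies the sup-threshold condition of \cite{MP2} without inspecting the infinitely many remaining good models.
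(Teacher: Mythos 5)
Your proof follows the same route as the paper's: invoke \cite[Theorem 4.15]{MP2} and use Lemma~\ref{lem:rhominorediuno} in contrapositive form (legitimate on good models, terminal being stronger than canonical) to propagate a threshold bound from one good model to all of them. The structure of both implications matches the paper's argument. However, there is one step in your converse direction that does not hold as written: from ``no good model can have threshold $\geq 1$'' you conclude $\overline{\rho}<1$. This is a non sequitur for suprema: a sequence of good models with thresholds, say, $1-1/n$ would all satisfy $\rho<1$ while $\overline{\rho}=1$, and Lemma~\ref{lem:rhominorediuno} only activates at values $a\geq 1$, so it cannot prevent thresholds from accumulating at $1$ from below. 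Nothing in the paper's toolkit rules this out.

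The reason this slip is inessential is that the criterion, as the paper cites it, is not the strict sup condition $0<\overline{\rho}<1$ but a model-by-model condition: $S$ is CE to a plane if and only if \emph{every} good model has threshold bounded by $1$ and \emph{some} good model has positive threshold. That universally quantified statement is exactly what your contrapositive argument establishes (each good model has $\rho<1$, and the given model has $\rho>0$), so your proof goes through verbatim once you match the criterion in this form and delete the detour through $\overline{\rho}$. If instead one insists on reading \cite[Theorem 4.15]{MP2} as the strict inequality $0<\overline{\rho}<1$ on the supremum, the gap is genuine and neither your argument nor the lemma closes it; note that the paper avoids the issue precisely by never asserting anything about the supremum, only about individual good models.
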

 \begin{proof} By Theorem 4.15 in \cite{MP2} $S$ is Cremona
   equivalent to a plane if and only if for all  good models
   the threshold is bounded from the above by 1 and there is a good model
   with positive threshold. By  Lemma~\ref{lem:rhominorediuno}
 if there is a good model, say $(T,S_T)$, with $0<\rho(T,S_T)<1$ all
  good models have threshold bounded by 1. 
 \end{proof}

\begin{remark} Via a general projection $S\subset\P^3$ of a quintic
elliptic scroll in $\P^4$, it is
 possible to produce examples of good models $(T,S_T)$ with $T$ rational,
 $S_T$ non rational and $\rho(T,S_T)=0$. Allowing non rational
 singularities we may produce pairs with $0<\rho<1$ and $S$ non
 rational.  Let $S\subset\P^3$ be a  cone over a smooth cubic
 curve then  $\rho(\P^3,S)=3/4$ and the pair is not CE to a plane.
\end{remark}

There is a class of surfaces, and more generally hypersurfaces, that
are CE to a hyperplane.

\begin{remark}
  \label{rem:monoids} Let $X\subset\P^{n}$ be a monoid, that is an
  irreducible and reduced hypersurface of degree $d$ with a point, say $p$, of
  multiplicity $d-1$. Then I can write $X=(x_0F_{d-1}+F_d=0)$.  Let us
  consider the linear
  system 
$$\L:=\{(F_{d-1}x_1=0), \ldots , (F_{d-1}x_{n}=0), X\}. $$
Then $\f_\L:\P^n\rat\P^n$ is a birational modification and $\f_\L(X)$
is a hyperplane. It follows that any monoid is CE to a hyperplane.
\end{remark}

As a warm up  I study rational surfaces of
degree at most 3, see  \cite{MP2} and \cite{Me2}.

\begin{lemma}\label{rem:cubics}
  Let $S\subset\P^3$ be an irreducible and reduced rational surface of
  degree at most 3. Then $(\P^3,S)$ is CE to a plane. 
\end{lemma}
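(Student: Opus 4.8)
The plan is to treat the three possible degrees separately, using only the two tools already in hand: the monoid criterion (Remark~\ref{rem:monoids}) and the threshold criterion (Corollary~\ref{cor:rhominorediuno}). If $\deg S=1$ then $S$ is a plane and there is nothing to prove. If $\deg S=2$, then $S$ is an irreducible quadric; since $S$ is reduced it has a smooth point $p$, for which $\mult_p S=1=\deg S-1$, so $S$ is a monoid and Remark~\ref{rem:monoids} gives that $S$ is CE to a plane. (Concretely, after moving $p$ to $[1:0:0:0]$ the equation of $S$ has no $x_0^2$ term and reads $x_0F_1+F_2$, exactly the monoid shape.)

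The interesting case is $\deg S=3$, which I would split according to the singularities of $S$. If $S$ is smooth, then $(\P^3,S)$ is itself a good model: $\P^3$ is smooth, hence terminal and $\Q$-factorial, and $S$ is smooth. One computes $S+mK_{\P^3}\sim_{\Q}\o_{\P^3}(3-4m)$, which is effective exactly for $m\le 3/4$, so $\rho(\P^3,S)=3/4\in(0,1)$, and Corollary~\ref{cor:rhominorediuno} yields that $S$ is CE to a plane. If instead $S$ is singular, I claim it carries a point of multiplicity $2=\deg S-1$, so that it is again a monoid and we conclude by Remark~\ref{rem:monoids}. A singular point of an irreducible cubic surface has multiplicity $2$ or $3$: in the first case we are done immediately, while a point of multiplicity $3$ forces $S$ to be the cone over a plane cubic curve $C$.

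It is precisely here that rationality of $S$ enters: it forces the base curve $C$ to be rational, hence singular, and the line joining the vertex to a singular point of $C$ is then a double line of $S$, producing the required points of multiplicity $2$. (Alternatively, since such a $C$ is birational to a line, the cone result of \cite{Me2} applies directly and gives CE to a plane without passing through the monoid shape.)

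The only genuinely new input over the monoid/cone circle of ideas is the smooth cubic, where the monoid trick is unavailable because $S$ has no point of multiplicity $2$; this is exactly the situation for which Corollary~\ref{cor:rhominorediuno} was designed, and the computation $\rho(\P^3,S)=3/4$ settles it at once. I therefore expect the main point to get right to be the dichotomy for singular cubics, namely the verification that every singular rational cubic is a monoid, the triple-point (cone) case being the one that actually uses the rationality hypothesis.
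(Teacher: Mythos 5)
Your proof is correct, and for most of the cases it coincides with the paper's own argument: quadrics are monoids, a smooth cubic is its own good model with $\rho(\P^3,S)=3/4$ so Corollary~\ref{cor:rhominorediuno} applies, and a cubic with a double point is a monoid. Where you genuinely differ is the cone case. The paper disposes of it by citing \cite[Theorem 2.5]{Me2}: the plane section of the cone is a rational curve, hence the cone is CE to a plane. You instead observe that rationality forces the base cubic $C$ to be rational, hence singular, so the line joining the vertex to a double point of $C$ is a double line of $S$; thus $S$ acquires points of multiplicity $2=\deg S-1$ and is again a monoid, and Remark~\ref{rem:monoids} finishes (you mention the \cite{Me2} route only as an alternative). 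Your reduction is correct and buys self-containedness: the whole lemma then rests only on the monoid trick and the threshold criterion, and it isolates exactly where the rationality hypothesis is used (a cone over a smooth cubic is irrational, and indeed not CE to a plane, as the paper remarks). The trade-off is that the paper's appeal to \cite{Me2} is the argument that scales to higher degree: in Proposition~\ref{cor:CE4} a rational quartic cone need not be a monoid (for instance a cone over a three-nodal plane quartic has only double lines away from the vertex), so there the citation to \cite{Me2} cannot be replaced by your device, whereas for cubics it works because every rational plane cubic has a double point.
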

\begin{proof}
  The statement is immediate in degree 2 by Remark~\ref{rem:monoids}, because
  any quadric is a monoid. Let $S$ be a rational
  cubic. If $S$ is smooth then $(\P^3,S)$ is a good model with
  $\rho(\P^3,S)=3/4$, hence  I
  conclude by Corollary~\ref{cor:rhominorediuno}. If $S$ has a double
  point, then it is a monoid and I conclude again by
  Remark~\ref{rem:monoids}. If $S$ is a cone, then its plane section is
  a rational curve and I conclude by \cite[Theorem 2.5]{Me2}.
\end{proof}

\section{Rational quartic surfaces}
In this section I  study the CE of rational quartic
surfaces proving Theorem 1.
The case of  quartic surfaces singular along a twisted
cubic is by far the most interesting from a geometric point of view.
\begin{remark}
There are two classes of these surfaces,  a general projection of a rational scroll of
 degree 4 in $\P^4$ and the tangential variety of the  twisted cubic. The
 former has ordinary double points along $\Gamma$ while the latter has
 cuspidal singularities. It is interesting to note that, from the
 point of view of CE they behave in the same way.
\end{remark}
\begin{proposition}\label{exa:twistedcubic} 
  Let $S\subset\P^3$ be a quartic surface singular along a twisted
 cubic $\Gamma$. Then $S$ is CE to a plane.
\end{proposition}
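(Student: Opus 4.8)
The plan is to use the net of quadrics through $\Gamma$ to trade the quartic $S$ for a plane conic, and then to linearise that conic by a planar Cremona map. First I would blow up the twisted cubic, $\pi\colon Y\to\P^3$, writing $\mathcal H=\pi^*\O_{\P^3}(1)$ and $E$ for the exceptional divisor, so that $\operatorname{Pic}(Y)=\Z\mathcal H\oplus\Z E$. Since $\Gamma$ is cut out scheme-theoretically by its net of quadrics $|I_\Gamma(2)|$, the system $|2\mathcal H-E|$ is base point free on $Y$ and defines a morphism $\psi\colon Y\to\P^2$ with $\psi^*\O_{\P^2}(1)=2\mathcal H-E=:Q$; geometrically the fibres of $\psi$ are the strict transforms of the chords of $\Gamma$, so $\psi$ exhibits $Y$ as a $\P^1$--bundle over $\P^2$, and $Y$ is $\psi$-birational to $\P^2\times\P^1$.

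The key observation is that a quartic singular along $\Gamma$ has multiplicity exactly $2$ there: a general plane section is a plane quartic meeting $\Gamma$ in three points, and a plane quartic cannot carry two points of multiplicity $\ge 3$. Hence the strict transform $S_Y$ has class $4\mathcal H-2E=2Q$. As $\psi$ has connected fibres, $H^0(Y,\O_Y(2Q))=H^0(\P^2,\O_{\P^2}(2))$, so $S_Y$ is the pullback $\psi^*C$ of a conic $C\subset\P^2$; because $S$ is irreducible and reduced, so is $C$, hence $C$ is a smooth conic. This is the promised simplification: on the fixed model $Y$ the quartic $S$ \emph{is} a smooth conic in $\P^2$, uniformly for both the nodal scroll and the cuspidal tangent developable, the distinction between the two families being absorbed into how $\psi$ degenerates over $C$ and playing no role below.

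I would then linearise $C$. Choose a quadratic Cremona transformation $g\colon\P^2\rat\P^2$ based at two points of $C$ and one point off it, so that $g(C)$ is a line $L$. Using the $\psi$-birational triviality of $Y$, lift $g$ to a birational self-map $\tilde g\colon Y\rat Y$ with $\psi\circ\tilde g=g\circ\psi$; then $\tilde g(S_Y)=\psi^*L\in|Q|$ is the strict transform of a quadric $S'\subset\P^3$ through $\Gamma$, which for general $L$ is smooth and irreducible. Transporting through the blow-down, $\omega:=\pi\circ\tilde g\circ\pi^{-1}\colon\P^3\rat\P^3$ is a Cremona transformation, it is defined at the generic point of $S$ (which avoids $\Gamma$), and $\omega(S)=S'$. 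A quadric is a monoid, hence Cremona equivalent to a plane by Remark~\ref{rem:monoids} (equivalently $(\P^3,S')$ is a good model with $\rho(\P^3,S')=1/2\in(0,1)$, and one may invoke Corollary~\ref{cor:rhominorediuno}, or Lemma~\ref{rem:cubics} directly). Composing the two equivalences shows that $S$ is Cremona equivalent to a plane.

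The step I expect to demand the most care is the construction and control of the lift $\tilde g$: one must verify that $\tilde g$ is defined at the generic point of $S_Y$, that $\tilde g(S_Y)$ is genuinely the reduced irreducible surface $\psi^*L$ rather than something collapsed or non-reduced, and that the descended map $\omega$ meets the condition of Definition~\ref{def:CE} at the generic point of $S$. Crucially, all of this concerns only generic behaviour along $C$ and never uses smoothness of $S_Y$ itself, which is precisely what makes the argument insensitive to whether $S$ is the nodal scroll or the cuspidal tangent developable.
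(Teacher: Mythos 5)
Your overall route coincides with the paper's: blow up $\Gamma$, use the $\P^1$-bundle structure $\psi:Y\to\P^2$ defined by $|2\mathcal{H}-E|$ to identify the strict transform of $S$ with $\psi^*C$ for a smooth conic $C$, move $C$ by a plane quadratic Cremona map lifted to $Y$, and descend to $\P^3$ to exhibit $S$ as CE to a quadric, concluding by the monoid remark. The paper implements the lift concretely, by blowing up three fibers of $\psi$ contained in $S_Y$ and blowing down the strict transforms of the three quadrics through $\Gamma$ and pairs of the corresponding chords (following \cite[5.7.4]{Me}), and then checks $T_1\iso T$; your appeal to the birational triviality of the bundle is a legitimate, if less explicit, substitute, and your preliminary steps (multiplicity exactly $2$ along $\Gamma$, $S_Y=\psi^*C$ via $\psi_*\O_Y=\O_{\P^2}$) are sound.

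However, there is a genuine error in your choice of $g$. A quadratic Cremona transformation based at $p_1,p_2,p_3$ sends a curve of degree $d$ with multiplicities $m_i$ at the $p_i$ to a curve of degree $2d-m_1-m_2-m_3$. With only two base points on $C$ and one off it, $g(C)$ has degree $4-1-1=2$: it is again a conic, not a line. Consequently $\tilde g(S_Y)=\psi^*(g(C))\in|2Q|$, which descends to yet another quartic singular along a twisted cubic, so the Cremona map $\omega$ you build makes no progress toward linearization. The fix is immediate, and it is exactly what the paper does: base $g$ at \emph{three} points of $C$ (any three distinct points of an irreducible conic are automatically non-collinear, so such a quadratic transformation exists); then $g(C)$ has degree $4-3=1$, hence $\tilde g(S_Y)=\psi^*L\in|Q|$ is the strict transform of an irreducible quadric through $\Gamma$, and the remainder of your argument goes through verbatim.
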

\begin{proof}
Let $\nu:T\to\P^3$ be the
 blow up of $\Gamma$ with exceptional divisor $E$. Then $T$ has a
 scroll structure, say $\pi:T\to\P^2$, given by the secant lines of $\Gamma$, onto
 $\P^2$. In particular all fibers of $\pi$ are irreducible and
 reduced and $T=\P({\mathcal E})$, for a vector bundle ${\mathcal
 E}$ on $\P^2$ classically known to be defined by the following exact
sequence
$$ 0\to\o_{\P^2}(-1)^{\oplus 2}\to\o_{\P^2}^{\oplus 4}\to {\mathcal
  E}\otimes\o_{\P^2}(1)\to 0.$$
Moreover I have
 $\pi^*\o(1)=\nu^*\o(2)-E$ and  $S_T=\pi^*C$, for $C\subset\P^2$ an
 irreducible and reduced
 conic. Let $S_T=\nu^{-1}_*(S)$ be the strict stranform of the surface
 $S$. Then  I have
 $\rho(T,S_T)=0$.
 Note that at this point both the tangential variety
 and the general projection are pull backs of smooth conics. Therefore
 they are isomorphic as abstract varieties, but they differ in the scheme theoretic intersection
 with the exceptional divisor $E$. This is the only reason we end up with
 non isomorphic quartic surfaces in $\P^3$.

 Next we want to extend a standard Cremona transformation of the base
 $\P^2$ to a birational modification of $T$, following \cite[5.7.4]{Me}.

Let $f_1, f_2, f_3\subset S_T$ be three fibers of $\pi$ in general
position and
$l_i=\nu(f_i)$ the corresponding line in $\P^3$. Let $Q_i\subset\P^3$ be the
unique quadric containing $\Gamma\cup l_j\cup l_k$, with
$\{i,j,k\}=\{1,2,3\}$, and $D_i=\nu^{-1}_*(Q_i)\subset T$ its strict
transform. Note that both $Q_i$ and $D_i$ are smooth quadrics and
$D_i\cap D_j=f_k$, for $\{i,j,k\}=\{1,2,3\}$.

Let $p:Z\to T$ be the blow up of the $f_i$ with exceptional divisors
$E_i$. Then by construction I have $E_i\iso \P^1\times\P^1$,
$p^{-1}_*(D_i)\iso\P^1\times\P^1$, and in both cases the normal bundle
is $(0,-1)$. This shows that there is a birational morphism
$q:Z\to T_1$ that blows down the $D_i$'s. 
\begin{claim}
  $T_1\iso
T$ and $S_{T_1}:=(q\circ p^{-1})(S_T)\sim \pi_1^*\o(1)$.
\end{claim}
\begin{proof}[Proof of the claim] The varieties $T$, $Z$ and $T_1$ are
  all scrolls. $T$ and $T_1$ over $\P^2$, and $Z$ over the blow up of
  $\P^2$ in 3 non collinear points, say $W$. Let $T_1=\P({\mathcal
    E}_1)$ and $\eta:W\to\P^2$ and $\xi:W\to\P^2$ be the morphisms at
  the base level, induced by $p$ and $q$ respectively,
$$\xymatrix{
&\P({\mathcal E}_Z)=Z\ar[dl]_{p}\ar[dr]^{q}\ar[dd]_{\pi_Z}&\\
\P({\mathcal E})=T\ar[d]_{\pi}&&T_1=\P({\mathcal E_1})\ar[d]_{\pi_1}\\
\P^2&W\ar[l]_{\eta}\ar[r]^{\xi}&\P^2,}
$$
In particular I have
$\eta^*{\mathcal E}\cong {\mathcal E}_Z\cong\xi^*{\mathcal E}_1$.
 The map $\xi\circ\eta^{-1}$ is a standard Cremona modification, thus  $\eta$ and
 $\xi$ are both the blow blow-ups of three non collinear points. Then
 I have ${\mathcal E}={\mathcal
   E}_1$ up to a twist. In particular $T\iso T_1$. Moreover, the
 choice of $f_i\subset S_T$ yields $(q\circ p^{-1})(S_T)\sim \pi_1^*\o(1)$.
\end{proof}

By the Claim $T_1\iso T$ and $S_{T_1}\sim \pi_1^*(\o(1))$.  By
construction the 3-fold
$T$ had the contraction $\nu:T\to\P^3$. Therefore there is a contraction
$\nu_1:T_1\to \P^3$ that sends $S_{T_1}$ to a quadric, This shows that $S$ is CE to a quadric and therefore
it  is CE to a plane by Proposition~\ref{rem:cubics}.
\end{proof}
\begin{remark}
 
I want to give an alternative way to interpret the birational
modification described in the proof of Proposition~\ref{exa:twistedcubic}.
Consider three secant lines
$f_1$, $f_2$, $f_3$ to $\Gamma$ and the linear system $\Lambda$ of cubics
containing $R:=\Gamma\cup l_1\cup l_2\cup l_3$. It is easy to see that
$\dim\Lambda=3$. Observe that a general
element $D\in\Lambda$ is a smooth cubic. In particular $D$ is
isomorphic to a plane blow up
in 6 general points, say $\{q_1,\ldots,q_6\}$.  The reducible curve
$R$ is represented, in the
plane, by the  $4$ conics passing through a subset of 
$\{q_1,\ldots,q_6\}$ in such a way that any point is triple for
$R$. Therefore  the plane model of $\Lambda_{|D}$ is a linear system
of dimension 2,  degree 9, multiplicity $3$ in $\{q_1,\ldots,q_6\}$,
and  containing the degree 8 curve $R$.
This shows that  the plane model of $\Lambda_{|D}$ is $R+\o(1)$ and
therefore it  induces a birational map onto $\P^2$. That is  $\Lambda$
induces a birational map onto $\P^3$.

This modification is a degeneration
of the classical cubo cubic Cremona modification centered on a curve
of degree 6 and genus 3 of $\P^3$.
\end{remark}




The following Theorem settles the CE problem for rational quartic surfaces.
\begin{proposition}
  \label{cor:CE4} Let $S\subset\P^3$ be a rational surface of degree
  $4$, then $S$ is CE to a plane.
\end{proposition}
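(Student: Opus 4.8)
The plan is to run a case analysis on $\Sing(S)$ and, in each case, either recognize $S$ among the surfaces already treated or exhibit a good model $(T,S_T)$ of $(\P^3,S)$ with $0<\rho(T,S_T)<1$, so that Corollary~\ref{cor:rhominorediuno} applies. The starting point is that $S$, being rational, cannot be smooth, since a smooth quartic is a $K3$ surface. More is true: if $S$ has only isolated singularities and $p$ is a double point, then blowing up $p$ via $\nu\colon T\to\P^3$ gives strict transform $S_T=4\nu^*\o(1)-2E$ and $K_T+S_T\sim 0$, so by adjunction $K_{S_T}\sim 0$ and $S$ has non-negative Kodaira dimension; a quartic with only double points is thus a (possibly degenerate) $K3$ and is not rational. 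Hence a rational quartic with isolated singularities must carry a point of multiplicity at least $3$: if the multiplicity is $3$ it is a monoid and I conclude by Remark~\ref{rem:monoids}, while if it is $4$ it is a cone over a rational plane quartic and I conclude by \cite[Theorem 2.5]{Me2}. This reduces the problem to the case in which $S$ is singular along a curve $\Gamma$.

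Next I would bound $\deg\Gamma$. Cutting $S$ with a general plane $\Pi$ produces a plane quartic $B=\Pi\cap S$ singular at each of the $\deg\Gamma$ points of $\Pi\cap\Gamma$; since $p_a(B)=3$ this forces $\deg\Gamma\le 3$. Blowing up $\Gamma$ one has $S_T=4\nu^*\o(1)-2E$ and $K_T+S_T\sim -E$, so the threshold is governed by the pseudoeffective cone inside $\langle\nu^*\o(1),E\rangle$. When $\Gamma$ is a line or a smooth conic it is planar, the strict transform of the plane through $\Gamma$ supplies the extremal ray $\nu^*\o(1)-E$, and a direct computation gives $\rho(T,S_T)=2/3\in(0,1)$; after resolving the transverse singularities of $S$ to reach an honest good model (with $S_T$ smooth and $T$ terminal and $\Q$-factorial) one checks the value stays in $(0,1)$, and Corollary~\ref{cor:rhominorediuno} closes these cases. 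When $\Gamma$ is an irreducible non-planar cubic, i.e. a twisted cubic, this is exactly Proposition~\ref{exa:twistedcubic}; an irreducible planar cubic is impossible, since a plane containing it would meet $S$ in a curve of degree at least $6$.

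The mechanism behind the two behaviours is the bigness of $S_T$: in the line and conic cases $\nu^*\o(1)-E$ is effective, so $S_T=2\nu^*\o(1)+2(\nu^*\o(1)-E)$ is big and $\rho>0$, whereas in the twisted cubic case the scroll structure makes $S_T=2\pi^*\o(1)$ a pullback, hence not big, and $\rho=0$. Accordingly, whenever the obvious model has $\rho=0$ I would abandon the threshold and instead use the explicit Cremona modification of Proposition~\ref{exa:twistedcubic}: a linear system of quadrics (or cubics) of multiplicity $1$, that is half the multiplicity of $S$, along a curve contained in $\Sing(S)$ drops the degree of $S$ to a surface that is CE to a plane by Lemma~\ref{rem:cubics}. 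The main obstacle I anticipate is the bookkeeping in the degenerate strata: reducible or singular double curves (two lines, a conic plus a line, a nodal or cuspidal $\Gamma$), together with pinch points or residual isolated singularities that force extra blow-ups before $(T,S_T)$ becomes a genuine good model, and the need to ensure these blow-ups keep the threshold strictly below $1$. Here Lemma~\ref{lem:rhominorediuno} is the safeguard: once a single good model with $0<\rho<1$ is produced, every good model has threshold at most $1$, so it suffices to control one convenient resolution per stratum, while the delicate $\rho=0$ strata are absorbed by the quadric construction.
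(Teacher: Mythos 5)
Your reduction step contains a genuine error that removes the hardest part of the problem. You claim that if $S$ has only isolated double points then, after blowing up a double point $p$, adjunction gives $K_{S_T}\sim 0$, so $S$ is a (possibly degenerate) $K3$ and cannot be rational; you conclude that a rational quartic with isolated singularities must have a triple point or be a cone. This is false. Adjunction for the hypersurface $S\subset\P^3$ (or for its strict transform) only gives triviality of the \emph{dualizing} sheaf $\omega_S\cong\O_S$, and this forces non-negative Kodaira dimension only when the singularities are rational (e.g.\ ADE). If the double point is an elliptic (irrational) singularity, the resolution carries genuinely negative discrepancies and the smooth model can be rational: this is exactly the content of Umezu's result cited in the paper, by which a rational quartic with isolated singularities has a unique irrational singular point. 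Concretely, the surfaces
$$x_0^2x_1^2+x_0x_1Q_2(x_2,x_3)+F_4(x_1,x_2,x_3)=0, \qquad
x_0^2x_1^2+x_0\bigl(x_2^3+x_1Q_2(x_2,x_3)\bigr)+F_4(x_1,x_2,x_3)=0$$
are rational quartics whose only singularities are isolated double points (a double point, resp.\ a tachnode, with an infinitely near double line), and they are not cones and have no triple point. Your argument declares these nonexistent, whereas the paper devotes the two cases $S_1$, $S_2$ to them: it maps $\P^3$ by the quadrics with multiplicity $a+1$ along the infinitely near double line onto the cone $X_1$ over the Veronese surface (resp.\ its projection $X_2$), places $\f_{\Lambda_a}(S)$ in $|\O(2)|$ there, and then either reads off a good model with threshold $4/5$ or projects from singular points until it reaches a quartic with non-isolated singularities. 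Without some replacement for this analysis your proof does not cover all rational quartics.

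The remaining part of your plan (bounding $\deg\Gamma\le 3$ by a general plane section, handling a line or conic in $\Sing(S)$ by a threshold computation on an explicit model, invoking Proposition~\ref{exa:twistedcubic} for the twisted cubic, and using Lemma~\ref{lem:rhominorediuno} together with Corollary~\ref{cor:rhominorediuno} so that one convenient model per stratum suffices) is in the right spirit and close to the paper, which however works with explicit birational models ($\P^1\times\P^2$ for the line case, a direct drop to a cubic surface for the conic case) rather than with the blow-up of $\Gamma$; your claim that the threshold ``stays in $(0,1)$'' after the extra blow-ups needed to reach a genuine good model is asserted, not proved, and this is precisely the kind of verification the paper avoids by passing to models where the surface is smooth or can be projected further. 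But the decisive defect is the first one: the isolated-double-point case is not vacuous, and it is where the real work of the proposition lies.
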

\begin{proof}  If $S$ is a cone then its general plane section is
  rational and I conclude by  \cite{Me2}. If $S$ has a
  point of multiplicity $3$, I conclude by Remark~\ref{rem:monoids}.

  From now on I assume that $S$ has only singular points of
  multiplicity $2$.
  The first step in the proof is to show that  rational quartic surfaces with
  isolated double points are CE to quartic surfaces with non isolated
  singularities.
  
 Assume that $S$ has isolated singularities. Then the rationality of
 $S$ forces the presence of an elliptic point. By a computation on
 Leray spectral series, see \cite{Um},  $S$ has a unique irrational
 singularity. Furthermore, by \cite{Je16} and \cite{De} classification, the irrational
 singularity is of the following type, in brackets the corresponding equation of $S$:
 \begin{itemize}
 \item[(1)] a double point with an
   infinitely near double line

   [$x_0^2x_1^2+x_0x_1Q_2(x_2,x_3)+F_4(x_1,x_2,x_3)=0$], 
 \item[(2)]  a tachnode with an infinitely near
   double line

   [$x_0^2x_1^2+x_0(x_2^3+x_1Q_2(x_2,x_3))+F_4(x_1,x_2,x_3)=0$]. 
\end{itemize}

Let $S$ be a rational quartic with a singular point of type $(a)$, $a\in\{1,2\}$, and  let $\Lambda_a\subset|\O(2)|$ be the linear
system of quadrics having multiplicity  $a+1$  on the
valuation associated to the double line. Then it is easy to check that
the map  $\f_{\Lambda_a}:\P^3\rat
X_a\subset\P^{7-a}$ is birational.

As observed in \cite[Example 4.3]{MP2} $X_1\subset\P^6$ is the cone over
the Veronese surface. With a similar argument, 
$X_2\subset\P^5$ is the projection of $X_1$ from any  smooth
point $z\in X_1$. This shows that  $X_2$ is the cone over 
the cubic surface $C\subset\P^4$, where $C$ is the projection of the
Veronese surface, say $V$,  from a point $z\in V$.

The main point here is that in both cases I have $S_a:=\f_{\Lambda_a}(S)\subset|\O_{\P^{7-a}}(2)|$.
\begin{case}[$S_2$] Assume thst $S$ has a point of type (2). 
Then the pair $(\P^3,S)$ is birational to $(X_2,S_2)$.
The surface $S_2\subset X_2\subset\P^5$ has degree 6 and $X_2$
has degree $3$. Let $x\in S_2$ be a general point and
$\pi:\P^5\rat\P^4$ the projection from $x$. Then $\pi(X_2)=Q$ is a
quadric cone and $S_x:=\pi(S_2)$ is a surface of degree 5. Hence there
is a a cubic hypersurface $D\subset\P^4$ such that
$$D_{|Q}=S_x+H,$$ for some plane $H$. Let $y\in S_x$ be a general
point and $\pi_y:\P^4\rat\P^3$ the projection from $y$.
\begin{claim}
  $\tilde{S}:=\pi_y(S_x)$ is a quartic surface singular along a line.
\end{claim}
\begin{proof}
  The point $y$ is general therefore $\deg\tilde{S}=4$. The map
  $\pi_{y|Q}$ is birational and it contracts  the embedded tangent cone
  ${\mathbb T}_yQ\cap Q=\Pi_1\cup\Pi_2$ to a pair of lines $l_1\cup l_2$. Up to reordering I
  may assume that $H\cap \Pi_1$ is the vertex of the cone.  Therefore $\tilde{S}\cap
  \Pi_1$ is a cubic passing through $y$. Hence $\tilde{S}$ has
  multiplicity 2 along $l_1$.
\end{proof}

The surface  $S$ is therefore CE to a quartic with non isolated singularities.
\end{case}

\begin{case}[$S_1$] Assume that $S$ has a point of type (1). Then
  $(\P^3, S)$ is birational to $(X_1,S_1)$.
  \begin{claim} $S_1$ is in the smooth locus of $X_1$ and $S_1$ has
   at most  ordinary double points.
  \end{claim}
  \begin{proof} Let me start describing the map
    $\f:=\f_{\Lambda_1}:\P^3\rat X_1$, following \cite[Example
    4.3]{MP2},

    Let $S\subset\P^3$ be
  the quartic, I
  may assume that the irrational singular point is
  $p\equiv[1,0,0,0]\in S$ and the equation of $S$ is
$$(x_0^2x_1^2+x_0x_1Q+F_4=0)\subset\P^3.$$
Let
   $\epsilon:Y\to\P^3$ be the weighted
blow up of $p$,
  with weights $(2,1,1)$ on the
  coordinates $(x_1,x_2,x_3)$, and
  exceptional divisor $E\iso\P(1,1,2)$. Then I have:
  \begin{itemize}
  \item[-] $\epsilon^*(x_1=0)=H+2E$,
  $\epsilon_{|H}:H\to (x_1=0)$ is an
  ordinary blow up and $H_{|E}$ is a smooth rational curve;
  \item[-]  $\epsilon^*(S)=S_Y+4E$, $S_{Y|E}$  is a
   smooth
  elliptic curve, and $S_{Y|H}$ is a union of four smooth disjoint
  rational curves.
  \end{itemize}
   In particular both
  $H$ and $S_Y$ are on the smooth locus of $E$ and hence on the smooth
  locus of $Y$. The surface $H$ is ruled by, the strict transforms of, the
  lines in the plane $(x_1=0)$ passing through the point $p$. Let
  $l_Y$ be a general curve in the ruling and
  $\Lambda_Y=\epsilon^{-1}_*(\Lambda_1)$ the
  strict transform linear system. Then  $E\cdot l_Y=1$ and by a direct computation I have
  \begin{itemize}
  \item[-] $\Lambda_Y\cdot
    l_Y=(\epsilon^*(\O(2))-2E)\cdot l_Y=0$
    \item[-] $S_Y\cdot
    l_Y=(\epsilon^*(\O(4))-4E)\cdot l_Y=0$
  \item[-] $K_Y\cdot l_Y=(\epsilon^*(\O(-4))+3E)\cdot l_Y=-1$.
  \item[-] $H\cdot l_Y=(\epsilon^*(\O(1))-2E)\cdot l_Y=-1$.
  \end{itemize}
  Then $H$  can be blown down to a smooth rational
  curve with a birational map $\mu:Y\to
  X_1$ and by construction $S_Y=\mu^*S_1$.  This shows that  the unique singularity of
  $X_1$ is the singular point in $E$ and $S_Y$ has at most isolated
  rational double points.
\end{proof}
If $S_1$ is smooth then $(X_1,S_1)$ is a good model of $(\P^3,S)$ with
threshold $\rho(X
_1,S)=4/5$ and I conclude by  Corollary~\ref{cor:rhominorediuno}.
Assume that $S_1$ is singular and let $x\in\Sing(S_1)$  be a singular
point. Set $\pi:\P^6\rat\P^5$ be the projection from $
x$. Then $\pi_{|X_1}:X_1\rat X_2$ is birational and $\pi(S_1)\in
|\O_{X_2}(2)|$. I am therefore back to the previous case. This shows
that $(\P^3,S)$ is CE to a quartic with non isolated singularities.
\end{case}

To conclude the Proposition I am  
left to study the case of quartics with non isolated
singularities.

From now on I fix  a rational quartic $S$ with a curve $\Gamma$ of double
points.
Assume first that $\Gamma$ contains a line $l$. Fix a general point
$x\in S$ and the linear system  $\Lambda$ of
quadrics through $l$ and $x$. Let $\f:\P^3\rat\P^5$ be the map
associated to the linear system $\Lambda$. I have
$\f(\P^3)=Z\iso\P^1\times\P^2$, embedded via the Segre map, and $\f(S)=\tilde{S}$ is a divisor of type
$(3,2)$ in $\P^1\times\P^2$. Note that divisors of type $(1,0)$ are
planes and divisors of type $(0,1)$ are quadrics, then I have $\deg\tilde{S}=3+4=7$.
If $\tilde{S}$ is smooth then $(Z,\tilde{S})$  is a good
model of $(\P^3,S)$ with $\rho(Z,\tilde{S})=2/3$ and I conclude by Corollary~\ref{cor:rhominorediuno}.
If $\tilde{S}$ is singular let $y\in\Sing(\tilde{S})$ be  a point and $\pi:\P^5\rat\P^4$ the
projection from $y$. Then $\pi_{|Z}$ is a birational map, $Y:=\pi(Z)\subset\P^4$
   is a quadric of rank $4$, and $S_Q:=\pi(\tilde{S})$ is a
  rational surface of degree $7-2=5$.
  \begin{claim}
    The vertex of the quadric is a smooth point  of $S_Q$.
  \end{claim}
  \begin{proof}
The surface    $\tilde{S}$ is a divisor of type $(3,2)$ in $Z$ and
    it is singular in $y$. Let $l$ and $P$, respectively, be the line
    and the plane passing through $x$ in $Z$. The general choice of
    $x\in S$ yields $l\not\subset \tilde{S}$. The line $l$ is
    mapped to the vertex of the quadric and $\tilde{S}_{|l}=2x+p$ for some point
    $p$. This shows that $S_Q$ contains the vertex of the quadric and
    it is smooth there.
  \end{proof}

  The 3-fold $Q$ is a quadric cone. If $S_Q$  is smooth let $\nu:T\to Q$ be a $\Q$-factorialization of
 $Q$ and $S_T$ the strict transform of $S_Q$. Then $(T,S_T)$ is a good
 model for $(\P^3,S)$ and $\rho(T,S_T)=2/3$. Therefore I conclude by Corollary~\ref{cor:rhominorediuno}. 
If $S_Q$ is singular let $z\in \Sing(S_Q)$ be a point. By the Claim it is not
the vertex of $Q$. Thus the projection from $z$ produces a birational
model of $(Q, S_Q)$ , say $(\P^3,Z)$, with  $Z$ a rational cubic and I
conclude  by Lemma~\ref{rem:cubics}.

Assume that $\Gamma$ does not contain a line.
It is easy to see that $\deg\Gamma\leq 3$. Moreover  if $\deg\Gamma=3$
the curve $\Gamma$ is a
twisted cubic. Therefore I am  left to
consider the following cases: $\Gamma$ an irreducible conic, $\Gamma$
a twisted cubic.
If $\Gamma$ is a conic let $x\in S$ be a general point. Then the linear system of quadrics
through $\Gamma$ and $x$ maps $(\P^3,S)$ to the pair $(\P^3,S^\prime)$ with $S^\prime$ a rational cubic surface, and I conclude
again by  Lemma~\ref{rem:cubics}.
If $\Gamma$ is a twisted cubic I conclude by Proposition~\ref{exa:twistedcubic}.
\end{proof}

\begin{example}
  I finish giving an explicit example of linearization of quartics
  that has been classically studied for being
  envelopes of bitangent spheres, \cite[Chapter V]{Je16}: the
  Cyclides.
  I thank Igor Dolgachev for pointing out this special class of
  quartics and Alex Massarenti for working out the explicit equations
  with Macauley2

  Let $S\subset\P^3$ be a quartic with the following equation
  $$(x^2+y^2+z^2-w^2)^2+w^2q=0, $$
  where $q$ is a polynomial of degree $2$.
The surface $S$ is singular along the conic
$C=(w=x^2+y^2+y^2=0)$.
Assume first that there is a further singularity, say $p\not\in C$,
this is for instance the case of the Dupin's cyclid.
 Let $Q$ be a general quadric
through $C\cup p$. Then $Q\cap S=2C+R$ for some residual curve
$R$. The residual curve $R$ is a quartic rational curve.
Fix a general point in $q\in S$. Then the linear system that
linearizes $S$ is
$$\Lambda=|\I_{C^2\cup p^2\cup R\cup q}(4)|.$$
If $p$ is defined over the field of real numbers then the map is defined over the
real numbers. Here is a sample with plausible equations. Let us start
with
$$S=((x^2+y^2+z^2-w^2)^2+w^2((w-x)^2+y^2-z^2)=0)$$
the singular point is $p=[1,0,0,1]$ and the general point
$q=[0,0,1,1]$. Then a linear system that linearize $S$ is
\begin{eqnarray*}
  \{x^2 zw + y^2zw + z^3w + 2x^2w^2  + y^2w^2  + xzw^2  - z^2w^2  -
  4xw^3  - 2zw^3  + 2w^2 ,&\\
  x ^2yw + y^3w +yz^2w + xyw^2  - 2yw^3,
  x^3w + xy^2w + xz^2w - 2x^2w^2  - 2y^2w^2  + xw^3,&\\
    x^4+ 2x^2y^2  + y^4  + 2x^2z^2 +2y^2 z^2  + z^4  - x^2 w^2  - y^2 w^2  - 3z^2w^2  - 2xw^3  + 2w^4\}.
\end{eqnarray*}
 
If $S$ has not further double points let $\Gamma$ be a smooth rational quartic curve in $S$. 
Let $p\in \Gamma\in S$ be a general point and consider the linear system
$$\Sigma=|\I_{C^3\cup p^3\cup\Gamma}(6)|.$$
Let $D\in\Sigma$ be a general element then
$$ D\cap S=6C+\Gamma+ R$$
This time the residual curve $R$ has degree 8 and genus 3. The linear system that linearizes $S$ is then
$$\Lambda=|\I_{C^3\cup p^3\cup R}(6)|.$$
Denote by $E$ the cone over $C$ with vertex $p$, then $E+S\subset\Lambda$ and $E$ is contracted by $\f_\Lambda$.

It is not clear to me if any such cyclid contains a rational quartic
curve defined over the field of real numbers. 
\end{example}


\end{document}